\newtheorem{theorem}{Theorem}
\newtheorem{corollary}[theorem]{Corollary}
\newtheorem{prop}[theorem]{Proposition}
\newtheorem{conj}[theorem]{Conjecture}
\newcommand{\plat}{{\rm plat\,}}
\newcommand{\fdes}{{\rm fdes\,}}
\newcommand{\ipk}{{\rm ipk\,}}
\newcommand{\asc}{{\rm asc\,}}
\newcommand{\lpk}{{\rm lpk\,}}
\newcommand{\des}{{\rm des\,}}
\newcommand{\msn}{\mathfrak{S}_n}
\newcommand{\mqn}{\mathcal{Q}_n}
\newcommand{\mqnn}{\mathcal{Q}_{n+1}}
\newcommand{\msnn}{\mathfrak{S}_{2n}}
\newcommand{\z}{ \mathbb{Z}}
\newcommand{\altrun}{{\rm altrun\,}}
\newcommand{\Eulerian}[2]{\genfrac{<}{>}{0pt}{}{#1}{#2}}
\newcommand{\Stirling}[2]{\genfrac{\{}{\}}{0pt}{}{#1}{#2}}
\newcommand{\arxiv}[1]{\href{http://arxiv.org/abs/#1}{\texttt{arXiv:#1}}}
\title{Enumeration of a dual set of Stirling permutations by their alternating runs}
\author[S.-M.~Ma]{Shi-Mei~Ma}
\address{School of Mathematics and Statistics,
        Northeastern University at Qinhuangdao,
         Hebei 066004, P.R. China}
\email{shimeimapapers@163.com (S.-M. Ma)}
\author[H.-N.~Wang]{Hai-Na Wang}
\address{Department of Mathematics, Northeastern University, Shenyang, 110004, China}
\email{hainawangpapers@163.com (H.-N. Wang)}
\begin{document}
\maketitle

\begin{abstract}
In this paper, we count a dual set of Stirling permutations by the number of alternating runs.
Properties of the generating functions, including recurrence relations, grammatical interpretations and convolution formulas are studied.
\bigskip

\noindent{\sl Keywords}: Stirling permutations; Alternating runs; Eulerian polynomials
\end{abstract}
\date{\today}

\section{Introduction}
Let $[n]=\{1,2,\ldots,n\}$. The {\it Stirling number of the second kind}
$\Stirling{n}{k}$ is the number of ways to partition $[n]$ into $k$ blocks.
Denote by $D$ the differential operator $\frac{d}{dx}$, and let $\vartheta=xD$. It is well known that
$$\vartheta^n=\sum_{k=1}^n\Stirling{n}{k}z^kD^k.$$
Let $$r(x)=\frac{\sqrt{1+x}}{\sqrt{1-x}}.$$
By induction, one can easily verify that there are positive integers $T(n,k)$, $k\in [2n-1]$, such that
$$\vartheta^n(r(x))=\frac{\sum_{k=1}^{2n-1}T(n,k)x^k}{(1-x)^n(1+x)^{n-1}\sqrt{1-x^2}}\quad\textrm{for $n\ge 1$}.$$
It is clear that the numbers $T(n,k)$ satisfy the initial conditions $T(1,1)=1$ and $T(1,k)=0$ for $k\neq 1$. Let $T_n(x)=\sum_{k=1}^{2n-1}T(n,k)x^k$. Using $\vartheta^{n+1}(r(x))=\vartheta(\vartheta^{n}(r(x)))$,
we get that the polynomials $T_n(x)$ satisfy the recurrence relation
\begin{equation}\label{Tnx-recu}
T_{n+1}(x)=(2nx+1)xT_n(x)+x(1-x^2)T_n'(x)
\end{equation}
for $n\geq 0$, with the initial values $T_0(x)=1$ and $T_1(x)=x$. In particular, $$T_n(1)=-T_{n+1}(-1)=(2n-1)!!\quad\textrm{for $n\ge 1$}.$$
Equating the coefficients of $x^k$ on both sides of~\eqref{Tnx-recu}, we get that the numbers $T(n,k)$ satisfy the recurrence relation
\begin{equation}\label{Tnk-recurrence}
T(n+1,k)=kT(n,k)+T(n,k-1)+(2n-k+2)T(n,k-2).
\end{equation}
The motivating goal of this paper is to find a combinatorial interpretation of the numbers $T(n,k)$.

Let $\msn$ denote the symmetric group of all permutations of $[n]$, where $[n]=\{1,2,\ldots,n\}$.
Let $\pi=\pi(1)\pi(2)\cdots\pi(n)\in\msn$. An {\it interior peak} in $\pi$ is
an index $i\in\{2,3,\ldots,n-1\}$ such that $\pi(i-1)<\pi(i)>\pi(i+1)$.
A {\it left peak} in $\pi$ is an index $i\in[n-1]$ such that $\pi(i-1)<\pi(i)>\pi(i+1)$, where we take $\pi(0)=0$.
Let $\ipk(\pi)$ (resp. $\lpk(\pi)$) be the number of interior peaks (resp. left peaks) in $\pi$.
We say that $\pi$ changes
direction at position $i$ if either $\pi({i-1})<\pi(i)>\pi(i+1)$, or
$\pi(i-1)>\pi(i)<\pi(i+1)$, where $i\in\{2,3,\ldots,n-1\}$. We say that $\pi$ has $k$ {\it alternating
runs} if there are $k-1$ indices $i$ such that $\pi$ changes
direction at these positions. Denote by $\altrun(\pi)$ the number of alternating runs in $\pi$.

Define $$W_n(x)=\sum_{\pi\in\msn}x^{\ipk(\pi)},~\widehat{W}_n(x)=\sum_{\pi\in\msn}x^{\lpk(\pi)},~R_n(x)=\sum_{\pi\in\msn}x^{\altrun(\pi)}.$$
From~\cite[Corollary~2, Theorem 3]{Ma1302}, we get
\begin{equation*}\label{altrun-peak01}
\frac{(1+x)^2}{2x}R_n(x)=xW_n(x^2)+\widehat{W}_n(x^2).
\end{equation*}

Let $R_n(x)=\sum_{k=1}^{n-1}R(n,k)x^k$.
The study of alternating runs of permutations was
initiated by Andr\'e~\cite{Andre84}, and he proved that the numbers $R(n,k)$ satisfy the recurrence relation
\begin{equation}\label{rnk-recurrence}
R(n,k)=kR(n-1,k)+2R(n-1,k-1)+(n-k)R(n-1,k-2)
\end{equation}
for $n,k\ge 1$, where $R(1,0)=1$ and $R(1,k)=0$ for $k\ge 1$.
It follows from~\eqref{rnk-recurrence} that the polynomials $R_n(x)$ satisfy the recurrence relation
\begin{equation}\label{Rnx-recurrence}
R_{n+2}(x)=x(nx+2)R_{n+1}(x)+x\left(1-x^2\right)R_{n+1}'(x),
\end{equation}
with the initial value $R_1(x)=1$.
Recall that a {\it descent} of a permutation $\pi\in\msn$ is a position $i$ such that $\pi(i)>\pi(i+1)$. Denote by $\des(\pi)$ the number of descents of $\pi$. Then the equations
\begin{equation*}
A_n(x)=\sum_{\pi\in\msn}x^{\des(\pi)+1}=\sum_{k=1}^{n}\Eulerian{n}{k}x^{k},
\end{equation*}
define the {\it Eulerian polynomial} $A_n(x)$ and the {\it Eulerian number} $\Eulerian{n}{k}$.
The polynomial $R_n(x)$ is closely related to $A_n(x)$:
\begin{equation}\label{rnx-anx}
R_n(x)=\left(\dfrac{1+x}{2}\right)^{n-1}(1+w)^{n+1}A_n\left(\dfrac{1-w}{1+w}\right),\quad
w=\sqrt{\frac{1-x}{1+x}},
\end{equation}
which was first established by David and
Barton~\cite[157-162]{DB62} and then stated more concisely by
Knuth~\cite[p.~605]{Knuth73}.
There is a large literature devoted to the polynomials $R_n(x)$~(see~\cite[A059427]{Sloane}). The reader is referred to~\cite{CW08,Ma1302} for recent results on this subject.

In~\cite{Carlitz},
Carlitz introduced $C_n(x)$ defined by
$$\sum_{n=0}^\infty \Stirling{n+k}{k}x^n=\frac{C_n(x)}{(1-x)^{2k+1}},$$
and asked for a combinatorial interpretation of $C_n(x)$.
Riordan~\cite{Riordan76} noted that $C_n(x)$ is the enumerator of trapezoidal words with n elements
by number of distinct elements, where trapezoidal words are such that the $i$-th element takes the
values $1,2,\ldots,2i-1$.
Gessel and Stanley~\cite{Gessel78} gave another combinatorial interpretation of $C_n(x)$ in terms of descents of Stirling permutations.
A {\it Stirling permutation} of order $n$ is a permutation $\sigma=\sigma(1)\sigma(2)\cdots\sigma(2n-1)\sigma(2n)$ of the multiset $\{1,1,2,2,\ldots,n,n\}$ such that
for each $i$, $1\leq i\leq n$, all entries between the two occurrencies of $i$ are larger than $i$.
Denote by $\mqn$ the set of Stirling permutation of order $n$.
For $\sigma\in\mqn$, let $\sigma(0)=\sigma(2n+1)=0$, and
let
\begin{align*}
  \des(\sigma)&=\#\{i\mid\sigma(i)>\sigma(i+1)\}, \\
  \asc(\sigma)&=\#\{i\mid\sigma(i-1)<\sigma(i)\}, \\
  \plat(\sigma)&=\#\{i\mid\sigma(i)=\sigma(i+1)\}
\end{align*}
denote the number of descents, ascents and plateaux of $\sigma$, respectively. Gessel and Stanley~\cite{Gessel78} proved that
$$C_n(x)=\sum_{\sigma\in\mqn}x^{\des{\sigma}}.$$
B\'ona~\cite[Theorem~1]{Bona08} introduced the plateau statistic on $\mqn$, and proved that descents, ascents and plateaux are
equidistributed over $\mqn$. The reader is referred to~\cite{Haglund12,Janson11,Kuba12} for recent
progress on the study of statistics on Stirling permutations.

In the next section, we show that $T_n(x)$ is the enumerator of a dual set of Stirling permutations of order $n$
by number of alternating runs.
\section{Combinatorial interpretation of $T(n,k)$}\label{Section-2}
Let $\sigma=\sigma(1)\sigma(2)\cdots\sigma({2n})\in\mqn$.
Let $\Phi$ be the injection which maps each first occurrence of entry $j$ in $\sigma$ to $2j$ and the
second $j$ to $2j-1$,
where $j\in [n]$. For example, $\Phi(221331)=432651$.
The {\it dual set} $\Phi(\mqn)$ of $\mqn$ is defined by $$\Phi(\mqn)=\{\pi\mid \sigma\in\mqn, \Phi(\sigma)=\pi\}.$$
Clearly, $\Phi(\mqn)$ is a subset of $\msnn$.
For $\pi\in\Phi(\mqn)$, the entry $2j$ is to the left of $2j-1$, and all entries in $\pi$ between $2j$ and $2j-1$ are larger than $2j$, where $1\leq j\leq n$.
Let $ab$ be an ascent in $\sigma$, so $a<b$.
Using $\Phi$, we see that $ab$ maps into $(2a-1)(2b-1)$, $(2a-1)(2b)$, $(2a)(2b-1)$ or $(2a)(2b)$, and vice versa.
Note that $\asc(\sigma)=\asc(\Phi(\sigma))=\asc(\pi)$.
Therefore, we have
$$C_n(x)=\sum_{\pi\in\Phi(\mqn)}x^{\asc(\pi)}.$$

It should be noted that $\pi\in \Phi(\mqn)$ always ends with a descending run. We now present the following result.
\begin{theorem}
We have
$$T(n,k)=\#\{\pi\in\Phi(\mqn)\mid \altrun(\pi)=k\}.$$
\end{theorem}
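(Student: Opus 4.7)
My plan is to proceed by induction on $n$ and show that
\[
T'(n,k):=\#\{\pi\in\Phi(\mqn)\mid\altrun(\pi)=k\}
\]
satisfies the same initial conditions $T'(1,1)=1$, $T'(1,k)=0$ for $k\ne 1$, and the same recurrence~\eqref{Tnk-recurrence} as $T(n,k)$. The base case is immediate from $\Phi(\mathcal{Q}_1)=\{21\}$, which has exactly one alternating run. For the inductive step, each $\tau\in\mqnn$ arises from a unique $\sigma\in\mqn$ by inserting the block $(n+1)(n+1)$ at one of the $2n+1$ slots of $\sigma$, since the two copies of $n+1$ are forced to be adjacent by maximality. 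Applying $\Phi$, this becomes the insertion of $(2n+2)(2n+1)$ into $\pi\in\Phi(\mqn)$ at one of $2n+1$ slots. The recurrence then reduces to the following slot-classification lemma: for every $\pi\in\Phi(\mqn)$ with $\altrun(\pi)=m$, among the $2n+1$ insertion slots, exactly $m$ leave $\altrun$ unchanged, exactly one raises $\altrun$ by $1$, and the remaining $2n-m$ raise $\altrun$ by $2$.

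To prove the lemma I would work with the direction word $d_1d_2\cdots d_{2n-1}\in\{+,-\}^{2n-1}$ of $\pi$, where $d_j$ records the sign of $\pi(j+1)-\pi(j)$. Inserting at slot $i$ prepends $--$ (if $i=1$), appends $+-$ (if $i=2n+1$), or replaces $d_{i-1}$ by $+--$ (for $2\le i\le 2n$); the change in $\altrun$ equals the net change in the number of adjacent-disagreement pairs of the direction word. Using that $\pi$ ends with a descending run, so $d_{2n-1}=-$, direct bookkeeping yields: slot $2n+1$ always contributes $+2$; slot $1$ contributes $[d_1=+]$; slot $2$ contributes $+1$ whenever $d_1=-$, and contributes $0$ or $2$ when $d_1=+$; slot $2n$ contributes $0$ or $2$ according to $d_{2n-2}$; and each interior slot $3\le i\le 2n-1$ contributes $0$ or $2$ depending on the local triple $(d_{i-2},d_{i-1},d_i)$.

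The main obstacle is packaging these case-by-case contributions into the three required global counts $m$, $1$, and $2n-m$. The clean observation is that the $0$-change slots are precisely: the ascending slot immediately before each interior peak of $\pi$, the descending slot immediately after each interior peak, and slot $1$ whenever $\pi$ starts with a descent. Since $\pi$ has $\lfloor m/2\rfloor$ interior peaks and begins with a descent iff $m$ is odd (the final run being descending), this totals $m$ zero-change slots in both parities. The unique slot contributing $+1$ is slot $1$ if $\pi$ starts ascending and slot $2$ if $\pi$ starts descending, and the remaining $2n-m$ slots all contribute $+2$. Plugging these counts into the decomposition over insertion slots gives $T'(n+1,k)=k\,T'(n,k)+T'(n,k-1)+(2n-k+2)T'(n,k-2)$, matching~\eqref{Tnk-recurrence} and closing the induction.
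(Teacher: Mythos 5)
Your proof is correct and follows essentially the same route as the paper: both establish the recurrence \eqref{Tnk-recurrence} for the combinatorial count via insertion of the pair $(2n+2)(2n+1)$ into a permutation of $\Phi(\mqn)$, classifying the $2n+1$ insertion slots into those that leave $\altrun$ unchanged (of which there are $\altrun(\pi)$), the unique slot raising it by $1$, and the remaining slots raising it by $2$. Your write-up merely makes explicit some details the paper leaves implicit (the uniqueness of the insertion decomposition and the direction-word bookkeeping behind the slot classification).
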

\begin{proof}
There are three ways in which a permutation $\pi\in\Phi(\mqnn)$ with $\altrun(\pi)=k$
can be obtained from a permutation $\sigma\in\Phi(\mqn)$ by inserting the pair (2n+2)(2n+1) into consecutive positions.
\begin{enumerate}
\item [(a)] If $\altrun(\sigma)=k$, then we can insert the pair $(2n+2)(2n+1)$ right before the beginning of each descending run, and right after the end of
each ascending run. This accounts for $kT(n,k)$ possibilities.
\item [(b)] If $\altrun(\sigma)=k-1$, then we distinguish two cases: when $\sigma$ starts in an
ascending run, we insert the pair $(2n+2)(2n+1)$ to the front of $\sigma$; when $\sigma$ starts in an
descending run, we insert the pair $(2n+2)(2n+1)$ right after the first entry of $\sigma$.
This gives $T(n,k-1)$ possibilities.
\item [(c)] If $\altrun(\sigma)=k-2$,
then we can insert the pair $(2n+2)(2n+1)$ into the remaining $(2n+1)-(k-2)-1=2n-k+2$ positions.
This gives $(2n-k+2)T(n,k-2)$ possibilities.
\end{enumerate}
Therefore, the numbers $T(n,k)$ satisfy the recurrence relation~\eqref{Tnk-recurrence}, and this completes the proof.
\end{proof}

Define $$M_n(x)=\sum_{\pi\in\Phi(\mqn)}x^{\ipk(\pi)},~N_n(x)=\sum_{\pi\in\Phi(\mqn)}x^{\lpk(\pi)}.$$
It follows from~\cite[Theorem~4]{Ma15} that $M_n(x)=x^nN_n\left(\frac{1}{x}\right)$. Moreover, from~\cite[Theorem~5]{Ma15}, we have
\begin{equation*}\label{symmetric}
(1+x)T_n(x)=xM_n(x^2)+N_n(x^2).
\end{equation*}

We now recall some properties of $N_n(x)$.
Let $N_n(x)=\sum_{k=1}^nN(n,k)x^k$. Apart from counting permutations in the set $\Phi(\mqn)$ with $k$ left peaks, the number $N(n,k)$ also has the following
combinatorial interpretations:
\begin{enumerate}
   \item [\rm ($m_1$)]  Let $e=(e_1,e_2,\ldots,e_n)\in\z^n$, and let
$I_{n,k}=\left\{ e\in \z^n|0\leq e_i\leq (i-1)k\right\}$, which known as the set of $n$-dimensional {\it $k$-inversion sequences} (see~\cite{Savage1201}).
The number of {\it ascents} of $e$ is defined by
$$\asc(e)=\#\left\{i:1\leq i\leq n-1\big{|}\frac{e_i}{(i-1)k+1}<\frac{e_{i+1}}{ik+1}\right\}.$$ Savage and Viswanathan~\cite{Savage1202} discovered that
$N(n,k)=\#\{e\in I_{n,2}: \asc(e)=n-k\}$.
  \item [\rm ($m_2$)] We say that an index $i\in [2n-1]$ is an {\it ascent plateau} of $\pi\in\mqn$ if $\pi(i-1)<\pi(i)=\pi(i+1)$.
  The number $N(n,k)$ counts Stirling permutations in $\mqn$ with $k$ ascent plateaux~(see~\cite[Theorem~3]{Ma15}).
  \item [\rm ($m_3$)] The number $N(n,k)$ counts perfect matching on $[2n]$ with the restriction that only $k$ matching pairs with odd minimal elements (see~\cite{Ma1502}).
\end{enumerate}
The polynomials $N_n(x)$ satisfy the recurrence relation
$$N_{n+1}(x)=(2n+1)xN_n(x)+2x(1-x)N'_n(x)$$
with initial value $N_0(x)=1$. The first few of $N_n(x)$ are
$$N_1(x)=x,
N_2(x)=2x+x^2,
N_3(x)=4x+10x^2+x^3,
N_4(x)=8x+60x^2+36x^3+x^4.$$
The exponential generating function for $N_n(x)$ is given as follows (see~\cite[Section~5]{Ma13}):
\begin{equation}\label{EGF-Nnx}
N(x,z)=\sum_{n\geq 0}N_n(x)\frac{z^n}{n!}=\sqrt{\frac{1-x}{1-xe^{2z(1-x)}}}.
\end{equation}

A polynomial $f(x)=\sum_{k=0}^na_kx^k$ is {\em symmetric} if $a_k=a_{n-k}$ for all $0\leq k\leq n$, while it is {\em unimodal} if there exists an index
$0\leq m\leq n$, such that $$a_0\leq a_1\leq \cdots\leq a_{m-1}\leq a_m\geq a_{m+1}\geq \cdots \geq a_n.$$
\begin{theorem}
The polynomial $T_n(x)$ is symmetric and unimodal.
\end{theorem}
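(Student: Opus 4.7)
For symmetry, the most direct route is to exploit the identity $(1+x)T_n(x) = xM_n(x^2) + N_n(x^2)$ displayed just above, together with $M_n(x) = x^n N_n(1/x)$. Substituting the latter into the former gives $(1+x)T_n(x) = x^{2n+1} N_n(1/x^2) + N_n(x^2)$. Replacing $x$ by $1/x$ in this equality and multiplying through by $x^{2n+1}$ leaves the right-hand side invariant while transforming the left-hand side into $(1+x)\,x^{2n}\,T_n(1/x)$; cancelling the common factor $1+x$ yields $x^{2n} T_n(1/x) = T_n(x)$, which is exactly the palindromic symmetry $T(n,k) = T(n, 2n-k)$.

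For unimodality I would proceed by induction on $n$ using the recurrence \eqref{Tnk-recurrence}. Setting $\delta_j^{(n)} := T(n,j) - T(n,j-1)$, a short manipulation of the recurrence produces the clean identity
\[
T(n+1, k+1) - T(n+1, k) = (k+1)\,\delta_{k+1}^{(n)} + 2\,\delta_{k}^{(n)} + (2n-k+2)\,\delta_{k-1}^{(n)}.
\]
The induction hypothesis (unimodality of $T_n$ with peak at $k = n$) gives $\delta_j^{(n)} \ge 0$ for every $j \le n$. Hence for $1 \le k \le n-1$ all three summands on the right are non-negative and the left-hand side is $\ge 0$, while the cases $k \ge n+1$ follow from the palindromic symmetry already proved.

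The main obstacle is the transition case $k = n$: here the first summand is $(n+1)\,\delta_{n+1}^{(n)}$ and symmetry forces $\delta_{n+1}^{(n)} = -\delta_n^{(n)}$, so that this term is non-positive. After cancellation the case reduces to the \emph{strong unimodality at the peak} inequality
\[
(n+2)\bigl[T(n,n-1) - T(n,n-2)\bigr] \;\ge\; (n-1)\bigl[T(n,n) - T(n,n-1)\bigr],
\]
which is not an immediate consequence of unimodality alone. To close the induction I plan to strengthen the inductive hypothesis to carry this peak estimate (together, if necessary, with analogous bounds on the next few forward differences $\delta_{n-2}^{(n)}, \delta_{n-3}^{(n)}, \ldots$), and then verify via the same difference-recurrence that the strengthened invariant propagates from $n$ to $n+1$. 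This propagation step is the technical heart of the proof and the place where the bare recurrence does not suffice; an attractive alternative is to exploit the grammatical interpretation or the convolution formulas promised in the abstract, either of which may supply a cleaner route through this bottleneck.
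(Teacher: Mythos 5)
Your symmetry argument is correct and is essentially the paper's: the paper declares symmetry ``immediate'' from $(1+x)T_n(x)=xM_n(x^2)+N_n(x^2)$, and your computation via $M_n(x)=x^nN_n(1/x)$ is precisely the right way to make that explicit, yielding $x^{2n}T_n(1/x)=T_n(x)$, i.e.\ $T(n,k)=T(n,2n-k)$. For unimodality you also follow the paper's route, namely induction on $n$ through the coefficient recurrence~\eqref{Tnk-recurrence}, and your difference identity
\[
T(n+1,k+1)-T(n+1,k)=(k+1)\,\delta_{k+1}^{(n)}+2\,\delta_{k}^{(n)}+(2n-k+2)\,\delta_{k-1}^{(n)}
\]
is correct (it is the paper's four-term identity with the extra summand $T(n,k)-T(n,k-3)$ redistributed). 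But the proof is not finished: the entire content of the induction step sits at the transition index, and there you only write down the inequality
\[
(n+2)\bigl[T(n,n-1)-T(n,n-2)\bigr]\;\ge\;(n-1)\bigl[T(n,n)-T(n,n-1)\bigr],
\]
observe correctly that plain unimodality does not imply it, and announce a plan to strengthen the inductive hypothesis. No strengthened invariant is actually formulated and no propagation argument is given, so as written the induction does not close and the theorem is not proved. That is a genuine gap.

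It is worth saying that your diagnosis is sharp rather than manufactured. The paper's own proof asserts that the four summands in its identity are nonnegative ``by the induction hypothesis,'' but at $k=n+1$ the first summand equals $n\bigl(T(n,n+1)-T(n,n)\bigr)=-n\bigl(T(n,n)-T(n,n-1)\bigr)\le 0$ by symmetry, so the published argument glosses over exactly the case you isolate; after cancellation it reduces to the same peak inequality you display. Identifying this soft spot is valuable, but it is not the same as repairing it. To finish along these lines you must state a precise stronger hypothesis (for instance a quantitative comparison between $\delta_n^{(n)}$ and $\delta_{n-1}^{(n)}$) and verify that the recurrence propagates it; alternatively, one could try to extract unimodality of the symmetric polynomial $T_n(x)$ from the convolution $2(1+x)^{n-1}A_n(x)=\sum_{k=0}^n\binom{n}{k}T_k(x)T_{n-k}(x)$ or from the relation to $M_n$ and $N_n$, but that too would need to be carried out rather than merely suggested.
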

\begin{proof}
It is immediate from~\eqref{symmetric} that $T_n(x)$ is a symmetric polynomial. We show the unimodality by induction on $n$. Note that $T_1(x)=x, T_2(x)=x+x^2+x^3$ and $T_3(x)=x+3x^2+7x^3+3x^4+x^5$ are all unimodal. Thus it suffices to consider the case $n\geq 3$. Assume that $T_n(x)$ is symmetric and unimodal. For $1\leq k\leq n+1$, it follows from~\eqref{Tnk-recurrence} that
\begin{align*}
 T(n+1,k)-T(n+1,k-1)& =(k-1)(T(n,k)-T(n,k-1))+(T(n,k-1)-T(n,k-2))\\
  & +(2n-k+2)(T(n,k-2)-T(n,k-3))+(T(n,k)-T(n,k-3)) \\
  &\geq 0,
\end{align*}
where the inequalities are follow from the induction hypothesis. This completes the proof.
\end{proof}

In the next section, we present a grammatical interpretation of $T_n(x)$.
\section{Grammatical interpretations}\label{Section-3}
The grammatical method was introduced by Chen~\cite{Chen93}
in the study of exponential structures in combinatorics. For an alphabet $A$, let $\mathbb{Q}[[A]]$ be the rational commutative ring of formal power
series in monomials formed from letters in $A$. A context-free grammar over
A is a function $G: A\rightarrow \mathbb{Q}[[A]]$ that replace a letter in $A$ by a formal function over $A$.
The formal derivative $D$ is a linear operator defined with respect to a context-free grammar $G$. More precisely,
the derivative $D=D_G$: $\mathbb{Q}[[A]]\rightarrow \mathbb{Q}[[A]]$ is defined as follows:
for $x\in A$, we have $D(x)=G(x)$; for a monomial $u$ in $\mathbb{Q}[[A]]$, $D(u)$ is defined so that $D$ is a derivation,
and for a general element $q\in\mathbb{Q}[[A]]$, $D(q)$ is defined by linearity.

The {\it hyperoctahedral group} $B_n$ is the group of signed permutations of the set $\pm[n]$ such that $\pi(-i)=-\pi(i)$ for all $i$, where $\pm[n]=\{\pm1,\pm2,\ldots,\pm n\}$.
For each $\pi\in B_n$,
we define
\begin{equation*}
\begin{split}
\des_A(\pi)&:=\#\{i\in\{1,2,\ldots,n-1\}|\pi(i)>\pi({i+1})\},\\
\des_B(\pi)&:=\#\{i\in\{0,1,2,\ldots,n-1\}|\pi(i)>\pi({i+1})\},
\end{split}
\end{equation*}
where $\pi(0)=0$.
Following~\cite{Adin01}, the {\it flag descent number} of $\pi$
is defined by
\begin{equation*}
\fdes(\pi):=\begin{cases}
2\des_A(\pi)+1,& \text{if $\pi(1)<0$};\\
2\des_A(\pi), & \text{otherwise}.
\end{cases}
\end{equation*}

Let
\begin{equation*}
\begin{split}
B_n(x)&=\sum_{\pi\in B_n}x^{\des_B(\pi)}=\sum_{k=0}^nB(n,k)x^{k},\\
S_n(x)&=\sum_{\pi\in B_n}x^{\fdes(\pi)}=\sum_{k=1}^{2n}S(n,k)x^{k-1}.
\end{split}
\end{equation*}
The polynomial $B_n(x)$ is called an {\it Eulerian polynomial of type $B$}, while $B(n,k)$ is called an {\it Eulerian number of type $B$} (see~\cite[A060187]{Sloane}).
It follows from~\cite[Theorem 4.3]{Adin01} that the numbers $S(n,k)$ satisfy the recurrence relation
\begin{equation*}\label{Snk}
S(n,k)=kS(n-1,k)+S(n-1,k-1)+(2n-k+1)S(n-1,k-2)
\end{equation*}
for $n,k\geq 1$, where $S(1,1)=S(1,2)=1$ and $S(1,k)=0$ for $k\geq 3$. The polynomials $S_n(x)$ is closely related to the Eulerian polynomial $A_n(x)$:
$$S_n(x)=\frac{1}{x}(1+x)^nA_n(x)\quad\textrm{for $n\ge 1$},$$
which was established by Adin, Brenti and Roichman~\cite{Adin01}.
It should be noted that $S_n(x)$ and $A_n(x)$ are both symmetric.

Consider the context-free grammar
\begin{equation*}\label{Context:01}
A=\{x,y,z\},~G=\{x\rightarrow p(x,y,z), y\rightarrow q(x,y,z), z\rightarrow r(x,y,z)\},
\end{equation*}
where $p(x,y,z),q(x,y,z)$ and $r(x,y,z)$ are polynomials in $x,y$ and $z$.
The {\it diamond product} of $z$ with the grammar $G$ is defined by
$$G\diamond z=\{x\rightarrow p(x,y,z)z, y\rightarrow q(x,y,z)z, z\rightarrow r(x,y,z)z\}.$$

We now recall two results on
context-free grammars.
\begin{prop}[{\cite[Theorem~6]{Ma1302}}]
If
\begin{equation}\label{Context:02}
G=\{x\rightarrow xy, y\rightarrow yz,z\rightarrow y^2\},
\end{equation}
then
\begin{equation*}\label{grammatical-alternating}
D^{n}(x^2)=x^2\sum_{k=0}^{n}R(n+1,k)y^kz^{n-k}.
\end{equation*}
Setting $x=z=1$, we have $D^{n}(x^2)|_{x=z=1}=R_{n+1}(y)$.
\end{prop}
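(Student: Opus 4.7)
The plan is to prove the proposition by induction on $n$. For the base case $n=0$, the identity $D^{0}(x^2)=x^2$ matches the right-hand side, since the only surviving term of the sum is $x^2R(1,0)$ and $R(1,0)=1$ by the initial data following~\eqref{rnk-recurrence}.

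For the inductive step, I would assume $D^{n}(x^2)=x^{2}\sum_{k}R(n+1,k)y^{k}z^{n-k}$ and apply the derivation $D$ term by term to each monomial $x^{2}y^{k}z^{n-k}$. Using the grammar rules $x\to xy$, $y\to yz$, $z\to y^{2}$ together with the Leibniz rule, a direct computation gives
\begin{equation*}
D\bigl(x^{2}y^{k}z^{n-k}\bigr)=2x^{2}y^{k+1}z^{n-k}+kx^{2}y^{k}z^{n-k+1}+(n-k)x^{2}y^{k+2}z^{n-k-1}.
\end{equation*}
Summing against the coefficients $R(n+1,k)$ and re-indexing so that the monomial $x^{2}y^{j}z^{n+1-j}$ receives contributions from $k=j-1$, $k=j$ and $k=j-2$ respectively, the coefficient of $y^{j}z^{n+1-j}$ in $D^{n+1}(x^{2})/x^{2}$ becomes
\begin{equation*}
jR(n+1,j)+2R(n+1,j-1)+(n+2-j)R(n+1,j-2),
\end{equation*}
which is precisely the right-hand side of Andr\'e's recurrence~\eqref{rnk-recurrence} with $n$ replaced by $n+2$ and $k$ replaced by $j$. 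This identifies the coefficient with $R(n+2,j)$ and closes the induction.

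For the specialization $x=z=1$, I would note that the inner sum collapses to $\sum_{k=0}^{n}R(n+1,k)y^{k}$, and since $R(n+1,0)=0$ for $n\ge 1$ while the nonzero indices in $R_{n+1}(y)=\sum_{k=1}^{n}R(n+1,k)y^{k}$ already run up to $n$, this sum is exactly $R_{n+1}(y)$; the trivial case $n=0$ is checked directly. I do not anticipate a real obstacle in the argument. The only mildly delicate step is the re-indexing when extracting the coefficient of $y^{j}z^{n+1-j}$, since the three contributing terms come from three different values of $k$ and must be aligned carefully so that the shift $n\to n+2$ in the Andr\'e recurrence matches the factor $n+2-j$ coming from $(n-k)$ at $k=j-2$; once this bookkeeping is executed cleanly the proof is essentially a mechanical verification.
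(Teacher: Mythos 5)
Your induction is correct: the computation of $D(x^2y^kz^{n-k})$ under the grammar, the re-indexing, and the match with Andr\'e's recurrence \eqref{rnk-recurrence} (with $n\mapsto n+2$, $k\mapsto j$) all check out, as do the base case and the specialization $x=z=1$. The paper itself only cites this proposition from \cite{Ma1302} without reproving it, but your argument is exactly the recurrence-matching technique the paper uses to prove the analogous Theorem~\ref{mthm2} for the grammar $G'$, so this is essentially the same approach.
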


\begin{prop}[{\cite[Theorem~10]{Ma1303}}]\label{Ma1303}
Consider the context-free grammar
\begin{equation}\label{Context:03}
G'=\{x\rightarrow xyz, y\rightarrow yz^2, z\rightarrow y^2z\},
\end{equation}
which is the diamond product of $z$ with the grammar $G$ defined by~\eqref{Context:02}.
For $n\geq 1$, we have
\begin{equation*}
\begin{split}
D^n(xy)&=x\sum_{k=1}^{2n}S(n,k)y^{2n-k+1}z^{k},\\
D^n(yz)&=\sum_{k=0}^nB(n,k)y^{2n-2k+1}z^{2k+1},\\
D^n(y)&=\sum_{k=1}^nN(n,k)y^{2n-2k+1}z^{2k},\\
D^n(z)&=\sum_{k=1}^nN(n,n-k+1)y^{2n-2k+2}z^{2k-1},\\
D^n(y^2)&=2^n\sum_{k=1}^{n}\Eulerian{n}{k}y^{2n-2k+2}z^{2k}.
\end{split}
\end{equation*}
\end{prop}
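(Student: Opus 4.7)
The plan is to establish each of the five identities by induction on $n$, working directly with the grammar $G'$ and matching the induced bivariate recurrence against the known recurrence of the combinatorial numbers on the right-hand side. Since $D=D_{G'}$ is a derivation, the only ingredients beyond the three rules $x\to xyz$, $y\to yz^2$, $z\to y^2z$ are the two Leibniz consequences
\begin{align*}
D(y^az^b)&=ay^az^{b+2}+by^{a+2}z^b,\\
D(xy^az^b)&=xy^{a+1}z^{b+1}+axy^az^{b+2}+bxy^{a+2}z^b.
\end{align*}
The base cases $n=1$ are immediate: for example $D(y)=yz^2=N(1,1)yz^2$, $D(yz)=y^3z+yz^3=B(1,0)y^3z+B(1,1)yz^3$, and $D(y^2)=2y^2z^2=2\Eulerian{1}{1}y^2z^2$, with analogous checks for $D(xy)$ and $D(z)$.

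For the inductive step, apply $D$ term by term to the hypothesized expression and collect the coefficient of each monomial on the right-hand side. In each case this produces a recurrence at level $n+1$ involving at most three coefficients at level $n$. For $D^n(y)$ the induced identity is
\[
N(n+1,k)=2kN(n,k)+(2n-2k+3)N(n,k-1),
\]
which is precisely the coefficient form of $N_{n+1}(x)=(2n+1)xN_n(x)+2x(1-x)N_n'(x)$. The identity for $D^n(z)$ reduces to the same recurrence after the change of index $j=n-k+1$, which accounts for the appearance of $N(n,n-k+1)$ there. For $D^n(xy)$ the induced identity is $S(n+1,k)=kS(n,k)+S(n,k-1)+(2n-k+3)S(n,k-2)$, matching the recurrence for $S(n,k)$ stated earlier. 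For $D^n(yz)$ one obtains the classical type $B$ Eulerian recurrence $B(n+1,k)=(2k+1)B(n,k)+(2n-2k+3)B(n,k-1)$. Finally, for $D^n(y^2)$ one picks up an overall factor of $2$ at each step (because every exponent of $y$ and $z$ in the expansion is even), which accumulates into the constant $2^n$, and the remaining identity is the standard Eulerian recurrence $\Eulerian{n+1}{k}=k\Eulerian{n}{k}+(n-k+2)\Eulerian{n}{k-1}$.

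The computations themselves are mechanical; the main obstacle is clerical. One must index the coefficients on the right-hand side so that the derivation-induced recurrence aligns verbatim with the standard recurrence of each family. This dictates, for example, the reflection $k\mapsto n-k+1$ in the coefficient of $D^n(z)$ (the exponent of $z$ there is shifted by one from that in $D^n(y)$) and the careful tracking of the factor of $2$ in $D^n(y^2)$. Once the indexings are fixed, each of the five inductions collapses to a one-line integer identity.
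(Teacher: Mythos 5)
Your proof is correct: all five induced recurrences (including the index reflection for $D^n(z)$, the shift $2n-k+3$ for $S$, and the factor of $2$ per step for $D^n(y^2)$) check out against the stated recurrences and base cases. The paper itself gives no proof of this proposition — it is quoted from \cite[Theorem~10]{Ma1303} — but your induction-plus-recurrence-matching argument is exactly the technique the paper uses for its own Theorem~\ref{mthm2}, so this is essentially the intended approach.
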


We can now conclude the following result.
\begin{theorem}\label{mthm2}
Let $G'$ be the context-free grammar given by~\eqref{Context:03}.
Then for $n\geq 1$, we have
\begin{equation*}
\begin{split}
D^n(x)&=x\sum_{k=1}^{2n-1}T(n,k)y^kz^{2n-k},\\
D^n(x^2)&=2x^2(y+z)^{n-1}\sum_{k=1}^{n}\Eulerian{n}{k}y^kz^{n-k+1}.
\end{split}
\end{equation*}
Setting $x=z=1$, we have $D^{n}(x)|_{x=z=1}=T_{n}(y)$ and  $D^{n}(x^2)|_{x=z=1}=2(1+y)^{n-1}A_{n}(y)$.
\end{theorem}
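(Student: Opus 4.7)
The plan is to prove both identities simultaneously by induction on $n$. The base case $n=1$ is immediate: $D(x)=xyz=x\cdot T(1,1)\,yz$, and $D(x^2)=2xD(x)=2x^2yz$, which matches the claimed formula since $(y+z)^0=1$ and $\Eulerian{1}{1}=1$. The first identity will essentially be a grammatical translation of the recurrence \eqref{Tnk-recurrence}, while the second will require a short invariance trick to absorb the prefactor $(y+z)^{n-1}$.

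For the first identity, one assumes inductively that $D^n(x)=x\sum_k T(n,k)y^kz^{2n-k}$ and applies $D$ to a single monomial $xy^kz^{2n-k}$ using $D(x)=xyz$, $D(y)=yz^2$, $D(z)=y^2z$. This yields
\[
D\bigl(xy^kz^{2n-k}\bigr)=xy^{k+1}z^{2n-k+1}+kxy^kz^{2n-k+2}+(2n-k)xy^{k+2}z^{2n-k}.
\]
Summing over $k$ weighted by $T(n,k)$ and extracting the coefficient of $xy^jz^{2n+2-j}$ in $D^{n+1}(x)$ produces $T(n,j-1)+jT(n,j)+(2n-j+2)T(n,j-2)$, which is precisely $T(n+1,j)$ by \eqref{Tnk-recurrence}.

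For the second identity, the key observation is that $x/(y+z)$ is annihilated by $D$: indeed $D(x)=xyz$ and $D(y+z)=yz^2+y^2z=yz(y+z)$ share the common logarithmic derivative $yz$, so $D(x/(y+z))=0$ in the field of fractions. Consequently
\[
D^n(x^2)=\Bigl(\frac{x}{y+z}\Bigr)^{\!2} D^n\bigl((y+z)^2\bigr),
\]
and, writing $u=y+z$ and $F_n(y,z)=\sum_k\Eulerian{n}{k}y^kz^{n-k+1}$, the claim reduces to showing $D^n(u^2)=2u^{n+1}F_n(y,z)$. Since $D(u)=yzu$, the inductive step for this reduced claim collapses to the polynomial identity
\[
(y+z)F_{n+1}(y,z)=(n+1)\,yz\,F_n(y,z)+D\bigl(F_n(y,z)\bigr),
\]
which, on extracting the coefficient of $y^jz^{n-j+3}$ on each side, is equivalent to the classical Eulerian recurrence $\Eulerian{n+1}{k}=k\Eulerian{n}{k}+(n-k+2)\Eulerian{n}{k-1}$. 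The specialization $x=z=1$ then yields $D^n(x)|_{x=z=1}=T_n(y)$ and $D^n(x^2)|_{x=z=1}=2(1+y)^{n-1}A_n(y)$ by definition of $T_n$ and $A_n$.

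The hard part is the second identity: inducting directly on the factored expression $2x^2(y+z)^{n-1}F_n$ is unpleasant because $(y+z)^{n-1}$ does not interact cleanly with $D$. Recognizing $x/(y+z)$ as a $D$-invariant sidesteps this entirely and reduces the whole problem to a homogeneous variant in which only the Eulerian recurrence needs to be invoked.
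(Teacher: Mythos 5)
Your proof is correct, and it diverges from the paper's in an interesting way. For the first identity your argument is essentially identical to the paper's: apply $D$ to a generic monomial $xy^kz^{2n-k}$, collect the coefficient of $xy^jz^{2n+2-j}$, and recognize the recurrence~\eqref{Tnk-recurrence}; your computation checks out. For the second identity the paper offers no argument at all beyond ``can be proved in a similar way,'' and a literal imitation of the first proof would only show that the coefficients of $D^n(x^2)$ satisfy a three-term recurrence of the type~\eqref{rnk-recurrence}; identifying the result with $2x^2(y+z)^{n-1}\sum_k\Eulerian{n}{k}y^kz^{n-k+1}$ would then still require an argument amounting to the David--Barton relation~\eqref{rnx-anx}. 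Your observation that $D\bigl(x/(y+z)\bigr)=0$ (since $D(y+z)=yz(y+z)$ and $D(x)=yzx$) sidesteps this: it legitimately reduces the claim to $D^n\bigl((y+z)^2\bigr)=2(y+z)^{n+1}F_n$, whose inductive step $(y+z)F_{n+1}=(n+1)yzF_n+D(F_n)$ I have verified coefficientwise --- the coefficient of $y^jz^{n-j+3}$ gives $\Eulerian{n+1}{j-1}+\Eulerian{n+1}{j}$ on the left and $(n+1)\Eulerian{n}{j-1}+j\Eulerian{n}{j}+(n-j+3)\Eulerian{n}{j-2}$ on the right, which agree by two applications of the Eulerian recurrence $\Eulerian{n+1}{k}=k\Eulerian{n}{k}+(n-k+2)\Eulerian{n}{k-1}$. (So strictly the identity \emph{follows from} rather than ``is'' a single instance of that recurrence; worth stating explicitly.) Your route buys a complete, self-contained proof of the $D^n(x^2)$ formula where the paper has only an assertion, at the mild cost of passing to the fraction field to make sense of the $D$-constant $x/(y+z)$.
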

\begin{proof}
Note that $D(x)=xyz$ and $D^2(x)=xyz^3+xy^2z^2+xy^3z$.
For $n\geq 1$,
we define $t(n,k)$ by
\begin{equation*}\label{Dnx-def}
D^n(x)=x\sum_{k\geq 1}t(n,k)y^kz^{2n-k}.
\end{equation*}
Then
\begin{align*}
  D^{n+1}(x)& =D(D^n(x)) \\
              & =x\sum_{k\geq1}t(n,k)y^{k+1}z^{2n-k+1}+x\sum_{k\geq1}kt(n,k)y^{k}z^{2n-k+2}+x\sum_{k\geq1}(2n-k)t(n,k)y^{k+2}z^{2n-k}.
\end{align*}
Hence
\begin{equation}\label{tnk-recu2}
t(n+1,k)=kt(n,k)+t(n,k-1)+(2n-k+2)t(n,k-2).
\end{equation}
By comparing~\eqref{tnk-recu2} with~\eqref{Tnk-recurrence}, we see that
the numbers $t(n,k)$ satisfy the same recurrence relation and initial conditions as $T(n,k)$, so they agree.
The assertion for $D^n(x^2)$ can be proved in a similar way.
\end{proof}

It follows from {\it Leibniz's formula} that
\begin{equation*}\label{Dnab-Leib}
D^n(uv)=\sum_{k=0}^n\binom{n}{k}D^k(u)D^{n-k}(v).
\end{equation*}
Hence
$$D^n(x^2)=\sum_{k=0}^n\binom{n}{k}D^k(x)D^{n-k}(x),$$
$$D^{n+1}(x)=D^n(xyz)=\sum_{k=0}^n\binom{n}{k}D^k(x)D^{n-k}(yz)=\sum_{k=0}^n\binom{n}{k}D^k(xy)D^{n-k}(z).$$
Therefore, we can use Proposition~\ref{Ma1303} and Theorem~\ref{mthm2} to get several convolution identities.
\begin{corollary}\label{cor1}
For $n\geq 1$, we have
\begin{equation}\label{convolution}
2(1+x)^{n-1}A_n(x)=\sum_{k=0}^n\binom{n}{k}T_k(x)T_{n-k}(x),
\end{equation}
$$T_{n+1}(x)=x\sum_{k=0}^n\binom{n}{k}T_k(x)B_{n-k}(x^2),$$
$$T_{n+1}(x)=x\sum_{k=0}^n\binom{n}{k}S_k(x)N_{n-k}(x^2).$$
\end{corollary}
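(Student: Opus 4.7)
The plan is to obtain all three identities by applying Leibniz's formula
\[D^n(uv)=\sum_{k=0}^n\binom{n}{k}D^k(u)D^{n-k}(v)\]
to three different factorizations of the argument, simplifying each factor using Theorem~\ref{mthm2} and Proposition~\ref{Ma1303}, and finally specializing the resulting identity in $\mathbb{Q}[[x,y,z]]$ at $x=z=1$. Throughout I will use the conventions $T_0(y)=S_0(y)=B_0(y)=1$, which match the base case $D^0=\mathrm{id}$ evaluated at $x=z=1$ and make the $k=0$, $k=n$ boundary terms of Leibniz work correctly.

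For~\eqref{convolution} I apply Leibniz to $x^2=x\cdot x$. Theorem~\ref{mthm2} converts the left-hand side of $D^n(x^2)=\sum_k\binom{n}{k}D^k(x)D^{n-k}(x)$ into $2(1+y)^{n-1}A_n(y)$ and each factor $D^j(x)$ into $T_j(y)$ at $x=z=1$, yielding the identity at once. This case requires no extra ingredients beyond Theorem~\ref{mthm2}.

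For the second identity I factor $xyz=x\cdot(yz)$ inside $D^{n+1}(x)=D^n(xyz)$, and for the third I instead use $xyz=(xy)\cdot z$. In each case $D^k(x)$ or $D^k(xy)$ is handled by Theorem~\ref{mthm2} or Proposition~\ref{Ma1303}, while the other factor $D^{n-k}(yz)$ or $D^{n-k}(z)$ is read off from Proposition~\ref{Ma1303}. The only subtlety is that the $y$-exponents there are written ``backwards'' relative to the target polynomials $B_{n-k}(y^2)$, $S_k(y)$, and $N_{n-k}(y^2)$, so a reindexing is needed in each piece. For $D^{n-k}(yz)|_{z=1}$ the reindexing $m=n-k-j$ combined with the palindromicity $B(n-k,j)=B(n-k,n-k-j)$ of the type~$B$ Eulerian polynomial gives $y\,B_{n-k}(y^2)$; for $D^k(xy)|_{x=z=1}$ the reindexing $m=2k-j+1$ together with the symmetry of $S_k(x)$ noted earlier in the paper gives $y\,S_k(y)$; and for $D^{n-k}(z)|_{z=1}$ the reindexing $m=n-k-j+1$ gives $N_{n-k}(y^2)$ directly, since Proposition~\ref{Ma1303} already records the coefficients as $N(n-k,n-k-j+1)$. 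Assembling the three pieces through Leibniz produces the two remaining identities.

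The main obstacle is purely this index bookkeeping: matching the factor of $y$ produced by each reindexing against the target form $x\sum\binom{n}{k}(\cdots)$, and verifying that the palindromicity of $B_n(x)$ and the symmetry of $S_n(x)$ indeed place the coefficients in the correct positions. None of the steps is technically deep, but each of the three identities has to be tracked carefully so that boundary terms match the conventions $T_0=S_0=B_0=1$.
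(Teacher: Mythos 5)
Your proposal is correct and follows essentially the same route as the paper: the paper also derives all three identities by applying Leibniz's formula to $D^n(x\cdot x)$, $D^n(x\cdot yz)$, and $D^n(xy\cdot z)$, then invoking Theorem~\ref{mthm2} and Proposition~\ref{Ma1303} and setting $x=z=1$. The reindexing and symmetry checks you spell out (palindromicity of $B_{n-k}$ and $S_k$) are exactly the bookkeeping the paper leaves implicit, and they check out.
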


Let $T(x,z)=\sum_{n=0}^\infty T_n(x)\frac{z^n}{n!}$.
Recall that the exponential generating function for $A_n(x)$ is given as follows (see~\cite[A008292]{Sloane}):
\begin{equation}\label{Axz}
A(x,t)=\sum_{n\geq 0}A_n(x)\frac{t^n}{n!}=\frac{1-x}{1-xe^{t(1-x)}}.
\end{equation}
Combining~\eqref{convolution} and~\eqref{Axz}, we get
\begin{equation}\label{Txz-EGF}
T(x,z)=\frac{{e^{z \left( x-1 \right)  \left( x+1 \right) }}+x}{1+x}\sqrt {{\frac {1-x^2}{{e^{2\,z \left( x-1
\right) \left( x+1 \right) }}-x^2}}}.
\end{equation}
From~\eqref{EGF-Nnx}, we have
$$\sum_{n\geq 0}M_n(x^2)\frac{z^n}{n!}=\sum_{n\geq 0}x^{2n}N_n\left(\frac{1}{x^2}\right)\frac{z^n}{n!}=\sqrt {{\frac {1-x^2}{{e^{2\,z \left( x-1
\right) \left( x+1 \right) }}-x^2}}}.$$
Note that $$\frac{{e^{z \left( x-1 \right)  \left( x+1 \right) }}+x}{1+x}=1+\sum_{n\geq 1}(x-1)^n(x+1)^{n-1}\frac{z^n}{n!}.$$
Therefore, from~\eqref{Txz-EGF}, we obtain
$$T_n(x)=M_n(x^2)+\sum_{k=0}^{n-1}\binom{n}{k}M_k(x^2)(x-1)^{n-k}(x+1)^{n-k-1}\quad\textrm{for $n\ge 1$}.$$

\section{Concluding remarks}
In this paper, we show that the polynomial $T_n(x)$ has many similar properties to $R_n(x)$. In fact, there are more similar properties deserve to be studied.
From the relation~\eqref{rnx-anx} and
the fact that $A_n(x)$ have only real zeros, Wilf~\cite{Wilf} proved that
$R_n(x)$ have only real zeros for $n\ge 2$. Moreover, it follows from~\eqref{Rnx-recurrence} that
all zeros of $R_n(x)$ belong to $[-1,0]$, and the zeros of $R_n(x)$ separate that of $R_{n+1}(x)$ (see~\cite[Corollary~8]{Ma2008}).

Let $f(x)$ and $F(x)$ be two polynomials with only real coefficients. Suppose that $f(x)$ and $F(x)$ both have only imaginary zeros. We say that $f(x)$ {\it separates} $F(x)$ if $\deg F=\deg f+2$ and
the sequences of real and imaginary parts of the zeros of $f(x)$ respectively separate that of $F(x)$.  In other words, let $f(x)=a\prod_{j=1}^{n-1}(x+p_j+q_j\mathrm i)(x+p_j-q_j\mathrm i)$, and let $F(x)=b\prod_{j=1}^{n}(x+s_j+t_j\mathrm i)(x+s_j-t_j\mathrm i)$, where $a,b$ are respectively leading coefficients of $f(x)$ and $F(x)$, $p_1\geq p_2\geq \cdots\geq p_{n-1},~q_1\geq q_2\geq \cdots\geq q_{n-1},s_1\geq s_2\geq \cdots\geq s_n$ and $t_1\geq t_2\geq \cdots\geq t_n$. Then we have $$s_1\geq p_1\geq s_2\geq p_2\geq \cdots\geq s_{n-1}\geq p_{n-1}\geq s_n,$$
$$t_1\geq q_1\geq t_2\geq q_2\geq \cdots\geq t_{n-1}\geq q_{n-1}\geq t_n.$$

Based on empirical evidence, we propose the following conjecture.
 \begin{conj}\label{conj}
For $n\geq 2$, all zeros of $T_n(x)/x$ are imaginary and $T_n(x)/x$ separates $T_{n+1}(x)/x$.
 \end{conj}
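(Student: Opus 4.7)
The plan is to strip off the trivial root and exploit the palindromic structure of $T_n(x)$. Set $U_n(x)=T_n(x)/x$, which by the symmetry $T(n,k)=T(n,2n-k)$ is palindromic of degree $2n-2$. The recurrence~\eqref{Tnx-recu} yields
\begin{equation*}
U_{n+1}(x)=\bigl((2n-1)x^2+x+1\bigr)U_n(x)+x(1-x^2)U_n'(x),\qquad U_1=1.
\end{equation*}
Writing $U_n(x)=x^{n-1}\tilde U_n(y)$ with $y=x+1/x$ defines a monic polynomial $\tilde U_n(y)$ of degree $n-1$. A short calculation, using $(x^2-1)^2=x^2(y^2-4)$, reduces the recurrence for $U_n$ to
\begin{equation*}
\tilde U_{n+1}(y)=(ny+1)\tilde U_n(y)+(4-y^2)\tilde U_n'(y),\qquad \tilde U_1=1.
\end{equation*}
Under the correspondence $\alpha\leftrightarrow y=\alpha+1/\alpha$, a real zero $\alpha\neq 0$ of $U_n$ pulls back to a real zero of $\tilde U_n$ with $|y|\geq 2$, a unit-circle (hence non-real) zero of $U_n$ to a real zero of $\tilde U_n$ in $(-2,2)$, and a genuinely complex zero of $\tilde U_n$ to a non-real $\alpha$ automatically. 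Direct iteration of the recurrence at $y=\pm 2$ gives $\tilde U_n(2)=(2n-1)!!$ and $\tilde U_n(-2)=(-1)^{n-1}(2n-3)!!$, so the endpoints are never zeros.

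To prove part (A), I would show by induction that every real zero of $\tilde U_n$ lies in the open interval $(-2,2)$. On the ray $y\geq 2$ one has $(ny+1)>0$ and $(4-y^2)\leq 0$, so the inductive hypothesis must carry not only the sign of $\tilde U_n$ but also that of $\tilde U_n'$ on $[2,\infty)$; I would propose the combined hypothesis that $\tilde U_n(y)>0$ and $\tilde U_n'(y)\geq 0$ there. The delicate side is $y\leq -2$, where $(ny+1)<0$ as well; the natural analogue is the hypothesis $(-1)^{n-1}\tilde U_n(y)>0$ and $(-1)^{n-1}\tilde U_n'(y)\geq 0$ on $(-\infty,-2]$, with the induction closed by differentiating the recurrence. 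Once every real zero of $\tilde U_n$ lies in $(-2,2)$, every zero of $U_n$ is non-real.

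For the separation assertion (part (B)), I would write $U_n(x)=P_n(x^2)+xQ_n(x^2)$ using the palindromic decomposition and interpret the separation of $U_n$ by $U_{n+1}$ as simultaneous interlacing of the real zeros of the pairs $(P_n,Q_n)$ and $(P_{n+1},Q_{n+1})$, in the spirit of the Hermite--Biehler theorem. Splitting the recurrence for $U_n$ into even and odd powers of $x$ gives coupled recurrences for $P_n,Q_n$ that one hopes to propagate inductively, in analogy with Wilf's use of the identity~\eqref{rnx-anx} to transport real-rootedness of $A_n$ to $R_n$. The real parts of the complex zeros of $U_n$ would be read off from the zeros of $P_n+xQ_n$ restricted to suitable contours, and the imaginary parts from an auxiliary Hermite--Biehler pair obtained from the even/odd splitting.

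The main obstacle lies in both stages. In stage (A), $\tilde U_n$ is not itself real-rooted (e.g.\ $\tilde U_3=y^2+3y+5$ has no real zeros while $\tilde U_4=y^3+7y^2+26y+17$ has exactly one), so the classical preservation theorems for the first-order operator $(ny+1)+(4-y^2)D$ cannot be applied verbatim; the induction must tolerate arbitrary complex zeros of $\tilde U_n$ while still controlling its real zeros. Stage (B) is harder still: Hermite--Biehler delivers interlacing of real zeros of real polynomials, whereas the conjecture asserts simultaneous interlacing of real \emph{and} imaginary parts of non-real zeros. Bridging this gap — perhaps by realising the zeros of $U_n$ as fixed points of an analytic map derived from the recurrence and tracking their continuous deformation as $n\to n+1$ — is where I would expect the bulk of the argument to go.
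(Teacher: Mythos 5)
The first thing to note is that the paper does not prove this statement: it is Conjecture~\ref{conj}, proposed explicitly ``based on empirical evidence,'' and the authors give no argument for it. There is therefore no proof in the paper to compare yours against; the only question is whether your proposal settles the conjecture, and by your own account it does not --- it is a plan whose hard steps are left open.

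Your preliminary reduction is nevertheless correct and worth recording. The recurrence $U_{n+1}=((2n-1)x^2+x+1)U_n+x(1-x^2)U_n'$ for $U_n=T_n/x$ follows from~\eqref{Tnx-recu}, the palindromicity of $U_n$ (degree $2n-2$, via~\eqref{symmetric}) justifies the substitution $U_n(x)=x^{n-1}\tilde U_n(x+1/x)$, and I have checked that this yields $\tilde U_{n+1}(y)=(ny+1)\tilde U_n(y)+(4-y^2)\tilde U_n'(y)$ with $\tilde U_3=y^2+3y+5$, $\tilde U_4=y^3+7y^2+26y+17$, $\tilde U_n(2)=(2n-1)!!$ and $\tilde U_n(-2)=(-1)^{n-1}(2n-3)!!$ as you state. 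This correctly reformulates the ``all zeros imaginary'' half as: $\tilde U_n$ has no real zero $y$ with $|y|\geq 2$. But the induction you propose for that claim does not close as stated: on $[2,\infty)$ your hypothesis gives $(ny+1)\tilde U_n(y)>0$ while $(4-y^2)\tilde U_n'(y)\leq 0$, so the two terms of the recurrence have opposite signs and the sign of $\tilde U_{n+1}$ is undetermined without a quantitative comparison of the two terms (some inequality of the form $(4-y^2)|\tilde U_n'(y)|<(ny+1)\tilde U_n(y)$), which you do not supply; the same difficulty recurs on $(-\infty,-2]$. The separation half is in even worse shape: ``separates'' here means simultaneous interlacing of both the real parts and the imaginary parts of the non-real zeros, and the Hermite--Biehler machinery you invoke controls only interlacing of real zeros of the even/odd parts --- you acknowledge yourself that the bridge is missing. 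So the proposal is a sensible change of variables together with an honest inventory of obstacles, not a proof; the statement remains open, exactly as in the paper.
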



\end{document}